\IfFileExists{\currfilename}{\embeddedfile{sourcefile}{\currfilename}}{}
\def\@seccntformat#1{\csname the#1\endcsname.\ } 
\newtheorem{theorem}{Theorem}
\newtheorem{proposition}{Proposition}
\newtheorem{lemma}{Lemma}
\theoremstyle{definition}
\newtheorem{definition}{Definition}
\theoremstyle{remark}
\newtheorem{remark}{Remark}
\newcommand{\wt}{\mathrm{wt}}
\newcommand{\Aut}{\mathrm{Aut}}
\newcommand{\FF}{\mathbb{F}}
\newcommand\GL[2]{\mathrm{GL}(#1,#2)}
\newcommand\QD[1]{\mathrm{QD}_{#1}}
\newcommand\vx[1]{#1}
\date{}
\title{On the existence of some completely regular codes\\ in Hamming graphs%
\thanks{%
The study was funded by the Russian Science Foundation, grant 22-11-00266, \url{https://rscf.ru/en/project/22-11-00266/}
}
}
\author{Denis S. Krotov\thanks{Sobolev Institute of Mathematics}}
\begin{document}
\maketitle
\begin{abstract}
We solve some first questions in the table of small parameters of completely regular (CR) codes in Hamming graphs $H(n,q)$. The most uplifting result is the existence of a $\{13,6,1;1,6,9\}$-CR code in $H(n,2)$, $n\ge 13$. We also establish the non-existence of a $\{11,4;3,6\}$-code and a $\{10,3;4,7\}$-code in $H(12,2)$ and $H(13,2)$. A partition of the complement of the quaternary Hamming code of length~$5$ into $4$-cliques is found, which can be used to construct completely regular codes with covering radius $1$ by known constructions. Additionally we discuss the parameters $\{24,21,10;1,4,12\}$ of a putative completely regular code in $H(24,2)$ and show the nonexistence of such a code in $H(8,4)$.
\end{abstract}

\tableofcontents

\section{Introduction}

In this paper, we consider the existence
of several putative non-linear completely
regular codes in Hamming graphs,
including some first open cases
in the tables~\cite{CRCtable}
of small parameters of completely regular codes.

\section{Main definitions and background}

\begin{definition}[\bf Hamming graph, $n$-cube, hypercube, code, linear code]
The \emph{Hamming graph} $H(n,q)$ is the Cartesian product of $n$ copies of~$K_q$,
the {complete graph} of order~$q$.
According to the definition of the Cartesian product, the vertex set of $H(n,q)$
consists of all $n$-words in some alphabet of order~$q$, which,
if $q$ is a prime power, is often associated with the elements of the Galois field
$\FF_q=\mathrm{GF}(q)$ of order~$q$.
In this case, the vertex set of $H(n,q)$ forms
an $n$-dimensional vector space~$\FF_q^n$ over~$\FF_q$.
The binary ($q=2$) Hamming graph $H(n,2)$ is also called the \emph{$n$-cube},
or a~\emph{hypercube}.
A set of vertices of $H(n,q)$ is called
a \emph{$k$-face} if it induces a subgraph isomorphic to $H(k,q)$.

\begin{definition}[\bf weight, weight distribution]\label{d:weight}
The \emph{weight} $\wt(\vx x)$ of a vertex $\vx x$ of $H(n,q)$
is the number of nonzero symbols in
the corresponding word. The \emph{weight distribution} of a set $C$ of vertices of $H(n,q)$ is the $(n+1)$-tuple
$(A_0,\ldots,A_n)$, where $A_i$ is the number of weight-$i$ vertices in~$C$.
\end{definition}

The \emph{(Hamming) distance} $d(x,y)$ between two vertices~$x$ and~$y$
in $H(n,q)$ is the length of the shortest path  between these vertices, or,
which is the same, the number of positions
in which the words~$x$ and~$y$ differ.
A set $C$ of at least two vertices of $H(n,q)$
is called a \emph{distance-$d$ code},
if the minimum distance between
two different elements of~$C$ is~$d$.
Such a code is called a \emph{linear code},
or an \emph{$[n,\log_q|C|,d]_q$-code},
if it is a vector subspace of~$\FF_q^n$.
\end{definition}

\begin{definition}[\bf equitable partition]
An ordered partition $(C_0, \ldots, C_{k-1})$
of the vertex set of a graph~$\Gamma$ into nonempty sets
$C_0$, \ldots, $C_{k-1}$ is called \emph{equitable},
or an \emph{equitable $k$-partition},
if there is a $k$-by-$k$
matrix $(S_{i,j})_{i,j\in \{0,...,k-1\}}$
(the \emph{quotient} matrix)
such that every vertex of $C_i$
has exactly~$S_{i,j}$ neighbors
in~$C_j$, ${i,j\in \{0,...,k-1\}}$.
\end{definition}

\begin{definition}[\bf covering radius, completely regular code, CR-code]\label{d:crc}
Given a nonempty set~$C$ of vertices of a graph~$\Gamma=(V,E)$,
the \emph{distance partition} with respect to~$C$
is the partition $(C^{(0)},C^{(1)},\ldots ,C^{(\rho )})$ of~$V$
with respect to the distance from~$C$: $C^{(i)}=\{\vx x\in V:
\ d(\vx x,C)=i\}$. The number~$\rho=\rho(C)$ of
non-empty cells different from~$C$
in the distance partition
is called the \emph{covering radius} of~$C$.
A nonempty set~$C$ (code) of vertices of a graph
is called
\emph{completely regular} (completely regular set or completely regular code)
if the partition of the vertex set
with respect to the distance
from~$C$ is equitable.
The quotient matrix $(S_{i,j})_{i,j=0}^\rho$ corresponding
to a completely regular code
is always tridiagonal, and for regular graphs it is often written
in the form of the \emph{intersection array}
$$\{b_{0},b_{1},\ldots,b_{\rho-1}; c_{1},c_{2},\ldots,c_{\rho}\}=\{S_{0,1},S_{1,2},\ldots,S_{\rho-1,\rho}; S_{1,0},S_{2,1},\ldots,S_{\rho,\rho-1}\}$$
(the diagonal elements of the quotient matrix are uniquely determined from
the intersection array and the degree of the graph).
For brevity, we will use the abbreviation \emph{$S$-CR code} for ``completely regular code with intersection array~$S$'' or ``with quotient matrix~$S$''.
\end{definition}
\begin{lemma}[see {\cite[Theorem~1]{Vas09:inter}} for $q=2$, {\cite[Section~7]{Kro:struct}} in general]\label{l:loc}
Assume that in $H(n,q)$ we have an equitable
partition $(C_0, ..., C_{k-1})$,
$k$-face $F$ and $(n-k)$-face $F'$
such that $F\cap F'=\{\bar 0\}$.
The weight distributions of
$C_i\cap F$, $i=0, ...,k-1$, are uniquely
determined by
\begin{itemize}
    \item the parameters $n$, $q$ of the Hamming graph and the parameter $k$ of the faces,
    \item the quotient matrix of the equitable partition,
    \item the weight distributions of
$C_i\cap F'$, $i=0, ...,k-1$,
\end{itemize}
and do not depend on the particular choice
of the equitable partition
$(C_0, ..., C_{k-1})$
or the faces $F$, $F'$.
\end{lemma}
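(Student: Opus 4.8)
The plan is to pass from sets to their characteristic (indicator) functions and exploit the harmonic (eigenspace) structure of the Hamming scheme. Write the $n$ coordinates as the disjoint union of the $k$ free positions $P$ of $F$ and the $n-k$ free positions $Q$ of $F'$, so that $H(n,q)=H(k,q)\,\square\,H(n-k,q)$ and every vertex is a pair $(u,v)\in\FF_q^P\times\FF_q^Q$, with $F=\{(u,\bar 0)\}$, $F'=\{(\bar 0,v)\}$, $\wt(u,\bar0)=\wt(u)$, $\wt(\bar0,v)=\wt(v)$. Let $A=A_P+A_Q$ be the adjacency operator split according to the two blocks, and recall that its eigenspaces factor as $V_m=\bigoplus_{a+b=m}V_a^P\otimes V_b^Q$, where $V_a^P$, $V_b^Q$ are the eigenspaces of $H(k,q)$, $H(n-k,q)$, and $A$ acts on $V_m$ by $\lambda_m=(q-1)n-qm$. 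For each block let $\omega_a^P\in V_a^P$, $\omega_b^Q\in V_b^Q$ be the zonal (radial) functions around $\bar 0$, whose values are Krawtchouk polynomials. Writing $\chi_i$ for the indicator of $C_i$, the first step is the elementary observation that the weight distribution of $C_i\cap F$ is equivalent, through the invertible Krawtchouk transform, to the list of zonal coefficients $c^{F,a}_i=\langle\chi_i|_F,\omega_a^P\rangle$, $a=0,\dots,k$; likewise $C_i\cap F'$ is encoded by $c^{F',b}_i=\langle\chi_i|_{F'},\omega_b^Q\rangle$, $b=0,\dots,n-k$. So it suffices to produce the $c^{F,a}_i$ from the $c^{F',b}_i$, together with $n$, $q$, $k$, $S$.

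Next I would introduce the key scalars: let $\gamma_i^{(a,b)}$ be the coefficient of $\chi_i$ along the double-zonal function $\omega_a^P\otimes\omega_b^Q\in V_{a+b}$. Restricting a function to a face preserves the degree in the surviving block and replaces the other block by its value at $\bar 0$; since evaluation at $\bar 0$ on $V_b^Q$ sees only the zonal component (because $\mathrm{proj}_{V_b^Q}\delta_{\bar 0}$ is proportional to $\omega_b^Q$), one obtains the clean expressions $c^{F,a}_i=\sum_b\kappa^Q_b\,\gamma_i^{(a,b)}$ and $c^{F',b}_i=\sum_a\kappa^P_a\,\gamma_i^{(a,b)}$, with universal nonzero constants $\kappa^P_a,\kappa^Q_b$ (the zonal values at the origin) depending only on $n,q,k,a,b$. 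Everything is thereby reduced to the array $\bigl(\gamma_i^{(a,b)}\bigr)$.

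The heart of the argument is a constraint on this array coming from equitability. Because $A(\omega_a^P\otimes\omega_b^Q)=\lambda_{a+b}(\omega_a^P\otimes\omega_b^Q)$ while $A\chi_j=\sum_i S_{i,j}\chi_i$, pairing and using the symmetry of $A$ gives $S^{\mathsf T}\vec\gamma^{(a,b)}=\lambda_{a+b}\,\vec\gamma^{(a,b)}$, where $\vec\gamma^{(a,b)}=(\gamma_i^{(a,b)})_i$. Hence each cell-vector $\vec\gamma^{(a,b)}$ is a genuine $\lambda_{a+b}$-eigenvector of $S^{\mathsf T}$ (in particular it vanishes unless $\lambda_{a+b}\in\mathrm{Spec}(S)$, which recovers that only finitely many degrees occur). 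Now I would recover the array from the $F'$-data: for fixed $b$, the known vector $c^{F',b}_\bullet=\sum_a\kappa^P_a\vec\gamma^{(a,b)}$ is a sum of eigenvectors of $S^{\mathsf T}$ for the pairwise distinct eigenvalues $\lambda_{a+b}$, so applying the interpolating polynomial $\pi_m(S^{\mathsf T})$ that equals $1$ at $\lambda_m$ and $0$ at the other occurring eigenvalues isolates $\kappa^P_{m-b}\vec\gamma^{(m-b,b)}$; dividing by $\kappa^P_{m-b}$ yields $\vec\gamma^{(a,b)}$ for every admissible pair $(a,b)=(m-b,b)$. Since $\pi_m$ depends only on $\mathrm{Spec}(S)$, this determines the whole array from $S$ and the $F'$-data, after which $c^{F,a}_i=\sum_b\kappa^Q_b\gamma_i^{(a,b)}$ and the inverse Krawtchouk transform deliver the weight distributions of $C_i\cap F$. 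As the recipe refers only to $n,q,k,S$ and the $F'$-distributions, the asserted independence of all choices is automatic.

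The main obstacle is making the reduction of the first two paragraphs fully rigorous: identifying the correct block-zonal functions, verifying that restriction to a face acts on them exactly as claimed (degree preservation in the live block, collapse to the zonal component under evaluation at the origin in the dead block), and computing the constants $\kappa^P_a,\kappa^Q_b$ together with their nonvanishing, which is what makes the final division legitimate. Once this harmonic bookkeeping is in place, the eigenvector constraint and the interpolation step are short. (When the $S$-spectrum is simple---as for the tridiagonal quotient matrices of completely regular codes---one may replace $\pi_m(S^{\mathsf T})$ by simply reading off coordinates in the eigenbasis, at the cost of a separate remark for repeated eigenvalues.)
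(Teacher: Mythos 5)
Your proof is correct. Bear in mind that the paper does not prove this lemma at all --- it is imported with pointers to \cite[Theorem~1]{Vas09:inter} and \cite[Section~7]{Kro:struct} --- so there is no in-paper argument to match; what you have written is essentially a self-contained version of the spectral mechanism underlying those sources. The three pillars all check out: (i) a $k$-face and an $(n-k)$-face meeting exactly in $\{\bar 0\}$ must have complementary sets of free coordinates, so the adapted splitting $V_m=\bigoplus_{a+b=m}V_a^P\otimes V_b^Q$ is legitimate; (ii) equitability gives $A\chi_j=\sum_i S_{i,j}\chi_i$, whence by symmetry of $A$ each $\vec\gamma^{(a,b)}$ lies in $\ker(S^{\mathsf T}-\lambda_{a+b}I)$, and since $\lambda_m=(q-1)n-qm$ is strictly monotone the eigenvalues $\lambda_{a+b}$ for fixed $b$ are pairwise distinct, so the interpolation extraction works whether or not $S$ has simple spectrum; (iii) the ``harmonic bookkeeping'' you defer is genuinely routine: taking $\omega_b^Q=\mathrm{proj}_{V_b^Q}\delta_{\bar 0}$ gives $\sum_b\omega_b^Q=\delta_{\bar 0}$ and hence $c_i^{F,a}=\sum_b\gamma_i^{(a,b)}$ with all $\kappa$'s equal to $1$, the passage between zonal coefficients and weight distributions is the invertible Krawtchouk transform, and the zonal value at the origin equals $\dim V_b^Q/q^{n-k}>0$, so the divisions are safe. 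The resulting recipe refers only to $n$, $q$, $k$, the quotient matrix, and the $F'$-distributions, which is exactly the independence statement of the lemma.
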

\begin{lemma}\label{l:sub}
Let $D$ be a set of vertices of $H(n,q)$
such that
\begin{itemize}
    \item[\rm (a)]
every vertex not in $D$ has at most $1$ neighbor in $C$.
\end{itemize}
Then the subgraph of $H(n,q)$
induced by~$D$ has connected components
isomorphic to $H(k,q)$
for some $k$ (which can be different for different components).
Moreover, each two of these components
are at distance at least~$3$ from each other.
\end{lemma}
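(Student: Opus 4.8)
The plan is to extract from hypothesis (a) two elementary \emph{closure} properties and then bootstrap them, using connectivity, into the statement that each component is a face. Throughout, (a) is read with $D$ in place of $C$: every vertex \emph{outside} $D$ has at most one neighbour in $D$, so a vertex with two neighbours in $D$ must itself lie in $D$. From this I would first record two facts. \textbf{Line closure:} if a line (a $1$-face) meets $D$ in at least two vertices, then it lies entirely in $D$, since for $q\ge 3$ any third vertex $t$ of the line is adjacent to the two vertices of $D$, so $t\notin D$ would give $t$ two neighbours in $D$ (for $q=2$ a line has only two vertices and the claim is vacuous). \textbf{Square closure:} if three vertices $w,u,v$ of a $2$-face form an ``L'', with $u,v$ adjacent to $w$ in two distinct directions $i\ne j$, then the fourth corner $w'$ lies in $D$; otherwise $w'\notin D$ would have the two neighbours $u,v$ in $D$, again contradicting (a).

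Next, fix a connected component $B$ of the subgraph induced by $D$, and for $w\in B$ let $S(w)$ be the set of directions $i$ for which the entire line through $w$ in direction $i$ lies in $B$. By line closure, through each $w\in B$ every direction is either fully in $B$ or meets $B$ only at $w$, and for adjacent $w,w'\in B$ their joining direction lies in both $S(w)$ and $S(w')$. The key step, and the one I expect to be the main obstacle, is to show that $S(w)$ is \emph{constant} along $B$. If $w,w'\in B$ differ in direction $i$ and $j\in S(w)$ with $j\ne i$, then for each vertex $v_0$ of the direction-$j$ line through $w$ (which lies in $B$), the ``L'' on $w',w,v_0$ forces, by square closure, the translate of $v_0$ in direction $i$ into $D$, hence into $B$; letting $v_0$ vary fills the direction-$j$ line through $w'$, giving $j\in S(w')$. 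By symmetry $S(w)=S(w')$ for adjacent vertices, and connectivity of $B$ makes $S(w)$ equal to a single set $S$, say with $|S|=k$.

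Finally I would identify $B$ with the $k$-face $F=\{x: x_j=w_j\text{ for all }j\notin S\}$. The inclusion $B\subseteq F$ holds because any path in $B$ changes only coordinates lying in $S$: each step's direction has two points of $B$ on its line, so by line closure it is an active direction, which equals $S$ by constancy. For $F\subseteq B$ I would induct on the number $m$ of coordinates in which $x\in F$ differs from $w$; resetting one differing coordinate $i\in S$ gives a vertex $x'\in F\cap B$ at distance $m-1$, and since $i\in S=S(x')$ the direction-$i$ line through $x'$ lies in $B$, so $x\in B$. Hence $B$ induces $H(k,q)$.

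For the last assertion, let $B,B'$ be distinct components. They cannot be at distance $1$, since two adjacent vertices of $D$ share a component. They cannot be at distance $2$ either: a length-$2$ geodesic between $u\in B$ and $u'\in B'$ has a midpoint $t$ adjacent to both, and $t\in D$ would place $t$ in $B$ and in $B'$ simultaneously, merging them, while $t\notin D$ would have the two distinct neighbours $u,u'$ in $D$, contradicting (a). Thus every two components are at distance at least $3$.
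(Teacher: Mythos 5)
Your proof is correct, and it takes a noticeably different route from the paper's. Both arguments rest on the same elementary consequence of (a) (read, as you correctly do, with $D$ in place of $C$): a vertex with two neighbours in $D$ must itself be in $D$. But the paper anchors the argument at a single basepoint: it normalises a component to contain $\bar 0$, \emph{chooses $\bar 0$ to have maximum degree in its component}, lets $I$ be the set of directions of the $D$-neighbours of $\bar 0$, proves the face spanned by $I$ is contained in the component by induction on weight, and then gets the reverse inclusion from an extremal/counting argument (if the component were larger, some vertex of the face would exceed the maximal degree $k$). You instead attach to \emph{every} vertex $w$ of the component its set $S(w)$ of active directions and prove, via your ``square closure'' observation, that $S$ is locally constant --- a parallel-transport argument --- after which both inclusions $B\subseteq F$ and $F\subseteq B$ fall out directly with no extremal choice needed. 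Your weight-free formulation is arguably cleaner in that it never has to justify the ``only if'' direction by maximality, at the cost of the extra lemma on constancy of $S$; the paper's version is shorter once the max-degree vertex is fixed. The handling of $q=2$ (vacuous line closure) and the distance-$\ge 3$ claim are both dealt with correctly, the latter by essentially the same one-line argument as the paper's.
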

\begin{proof}
Without loss of generality,
we assume that $\bar 0=00...0$ belongs to $D$ and has the largest degree~$k$ in the connected component~$D_0$ induced
by~$D$ and containing $\bar 0$.
If a word $\vx x$ with the only nonzero
in the $i$th position is in $D$,
then every other word $\vx y$ with the only nonzero
in the $i$th position is also in $D$
(indeed, it has at least~$2$ neighbors $\bar 0$ and $\vx x$
in $D$ and by hypothesis (a) must belong to~$D$).
We conclude that there is a set $I$ of indices such that
$\vx v$ is the $D$-neighbor of $\bar 0$ if and only if
$\vx v$ has the only nonzero symbol in some $i$th position where
$i\in I$. We will prove the following statement,
which is obviously equivalent to the first claim of the lemma.
\begin{itemize}
    \item [(*)] a vertex $\vx z$ belongs to the component
    $D_0\ni \bar 0$ if and only if  it has zeros out of $I$.
\end{itemize}
 The ``if'' part is proven by induction on the weight $w$ of $\vx z$. We already have (*) for $w=0$ and $w=1$. If $w\ge 2$,
 then $\vx z$ has exactly $w$ neighbors of weight~$w-1$. By the induction hypothesis, they belong to $D_0$; by hypothesis (a) of the lemma, $\vx x$ cannot be out of $D$ (and of course it is in the same connected component as its neighbors of weight~$w-1$). This finishes the induction step and proves the ``if'' part of (*).

 The ``only if'' part of (*) is straightforward from the ``if'' part and the assumption
 that $\bar 0$ has the maximum number~$k$ of $D$-neighbors
 among $D_0$.
 Indeed, every vertex from the set $D_0'$ of all $q^k$ vertices satisfying (*) already has~$k$ neighbors in $D_0'$; hence, $D_0'=D_0$.

We have proved the first claim of the lemma. The second claim is straightforward from (a).
\end{proof}

Note that Lemma~\ref{l:sub} is applicable to a
$\{b_{0},b_{1},\ldots,b_{\rho-1}; c_{1},c_{2},\ldots,c_{\rho}\}$-CR code $C$ if $c_1=1$ or to the last component $C^{(\rho)}$ of its distance partition if $b_{\rho-1}=1$. In the first case, the inner degree $k$ is the same for all components and equal to $n-b_{0}/(q-1)$; in the last case, to $n-c_{\rho}/(q-1)$.

\section{CR codes from perfect codes}
It is straightforward
that the maximum-weight codewords
of the ternary Golay $[11,6,5]_3$ code
and the ternary
extended Golay $[12,6,5]_3$ code
form a binary (in the alphabet $\{1,2\}$)
code of cardinality~$24$
and distance~$5$ and~$6$, respectively.
Such binary codes, the punctured Hadamard code of length~$11$ and the Hadamard code of length~$12$, are known to be completely regular and unique up to equivalence.
In this section we find that two more codes
can be obtained from linear perfect (Hamming)
codes by alphabet retraction.

\begin{remark}
   In general,
the set of maximum-weight codewords
of the extended Hamming $[n=\frac{q^m-1}{q-1},n-m,3]_q$ code in $H(n,q)$
is not necessarily a CR code in $H(n,q-1)$.
It was checked for $(m,q)$ from $(2,4)$, $(3,4)$, $(4,4)$, $(5,4)$, $(3,3)$, $(4,3)$, $(5,3)$, $(2,5)$, $(2,7)$, $(2,8)$, $(2,9)$, and only $(2,4)$
and $(3,3)$ induced a CR code in such a way.
\end{remark}

\subsection{\{13,6,1;1,6,9\}-CR, H(13+,2)}
\subsubsection{Observations}
Let $C$ be a $\{13,6,1;1,6,9\}$-CR code.
$(C_0$, $C_1$, $C_2$, $C_3)$ --- equitable distance partition with respect to $C=C_0$. The quotient matrix is
$Q=\begin{pmatrix}
    0&13&0&0\\1&6&6&0\\0&6&6&1\\0&0&9&4
\end{pmatrix}.$
Because of $1$ in the intersection array,
by Lemma~\ref{l:sub},
the vertices of $C_3$
form $26$ $4$-faces at distance at least~$3$ from each other, they are divided into $13$ antipodal pairs of $4$-faces.
Two such $4$-faces have at least~$1$ common direction (otherwise, the corresponding two antipodal pairs of $2$-faces are at distance at least~$2$ from each other, a contradiction).
The weight distribution of colors with respect to a vertex from $C$ is
\begin{verbatim}
[  1   0   0   0 ]
[  0  13   0   0 ]
[  0  39  39   0 ]
[ 13 104 156  13 ]
[ 26 325 312  52 ]
[ 39 624 546  78 ]
[ 65 767 819  65 ]
[ 65 767 819  65 ]
[ 39 624 546  78 ]
[ 26 325 312  52 ]
[ 13 104 156  13 ]
[  0  39  39   0 ]
[  0  13   0   0 ]
[  1   0   0   0 ]
\end{verbatim}

\subsubsection{Structure}

\begin{proposition}
Let  the code $D$ be formed
by all cyclic permutations
of words of form
$000100ab01c1d$,
$111011ab10c0d$,
where $a,b,c,d\in\{0,1\}$.
The code $D$ is completely regular with
intersection array
$\{9,6,1;1,6,13\}$.

The elements of the distance partition
$(D=D^{(0)},D^{(1)},D^{(2)},D^{(3)}=C)$
have cardinalities $13\cdot 32$,
$9\cdot 13\cdot 32$,
$9\cdot 13\cdot 32$, $9\cdot 32$,
respectively. The automorphism group
of the partition (as well as of each of its components)
has order $11232$, is isomorphic to $\GL33$,
and acts transitively on each $D^{(i)}$,
$i=0,1,2,3$.
The permutation automorphism group
has order $39$ and generated
by the coordinate permutations
$(1,2,3,4,5,6,7,8,9,10,11,12,13)$
and
 $(1,2,5)(3,8,10)(4,11,6)(9,13,12)$.
\end{proposition}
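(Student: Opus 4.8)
The plan is to reduce complete regularity to a handful of local neighbor counts by exhibiting a large automorphism group acting transitively on every cell, and to read off the ``near'' part of the quotient matrix directly from the block structure of $D$. First I would record that block structure: fixing the four free positions $7,8,11,13$, the sixteen words of the first form fill a $4$-face, and likewise for the second form, so $D$ is a union of $26$ four-faces (the two base faces together with their $13$ cyclic shifts), giving $|D|=26\cdot 16=13\cdot 32$. Global complementation $x\mapsto x+\bar 1$ sends the first form to the second (with the free bits complemented), hence preserves $D$ and pairs the $26$ faces into $13$ antipodal pairs. I would then verify the hypothesis of Lemma~\ref{l:sub} for $D$ --- that no vertex outside $D$ has two neighbors in $D$ --- which immediately yields that the faces lie at mutual distance $\ge 3$, that each codeword has $4$ neighbors inside its face and $9$ outside, and therefore that the first row of the quotient matrix is $(4,9,0,0)$ with $c_1=1$.

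Next I would pin down the full symmetry. Identifying the $13$ coordinates with the points of $PG(2,3)$ so that $\sigma=(1,\dots,13)$ is a Singer cycle and $\tau$ the order-$3$ element of its normalizer, the pure coordinate permutations preserving $D$ form the Singer normalizer $\langle\sigma,\tau\rangle$, a Frobenius group of order $3\cdot 13=39$. The full group arises from the monomial automorphism group of the ternary Hamming $[13,10,3]_3$ code, which is $\mathrm{GL}(3,3)$ of order $11232$: under retraction to the binary alphabet $\{1,2\}$ the field scalars $\{1,2\}$ of a monomial transformation become coordinate complementations, so each monomial automorphism induces a signed coordinate permutation preserving the whole distance partition. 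I would then prove transitivity on each cell by orbit--stabilizer: the orbit sizes $416,3744,3744,288$ force stabilizer orders $27,3,3,39$, and I would exhibit the corresponding stabilizers --- a Sylow $3$-subgroup of $\mathrm{GL}(3,3)$ for a codeword of $D$, the Singer normalizer for a vertex of $D^{(3)}$ --- to confirm that each orbit is the full cell.

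With transitivity secured, the partition $(D^{(0)},D^{(1)},D^{(2)},D^{(3)})$ is automatically equitable, and it remains to compute one representative per cell. For the weight-$3$ codeword with support $\{4,10,12\}$ the four flips in the free positions stay in $D$ and the remaining nine leave $D$, reconfirming the row $(4,9,0,0)$; analogous counts at representatives of $D^{(1)}$, $D^{(2)}$, $D^{(3)}$ give the rows $(1,6,6,0)$, $(0,6,6,1)$, $(0,0,13,0)$ of the intersection array $\{9,6,1;1,6,13\}$, where Lemma~\ref{l:loc} can be invoked to keep the weight-distribution bookkeeping consistent. Finally the cardinalities satisfy $13\cdot 32+9\cdot 13\cdot 32+9\cdot 13\cdot 32+9\cdot 32=8192=2^{13}$, so the four cells exhaust $H(13,2)$; hence the covering radius is exactly $3$ and the far cell $D^{(3)}$ is the desired $\{13,6,1;1,6,9\}$-code $C$.

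The main obstacle is the middle of this argument: proving that $\mathrm{GL}(3,3)$ really acts transitively on $D^{(1)}$ and $D^{(2)}$ --- not merely that the arithmetic of orbit sizes is consistent --- and that the pure-permutation subgroup is exactly the order-$39$ Singer normalizer rather than something larger. This hinges on choosing the right $PG(2,3)$-labeling of the coordinates, the one under which the ad hoc forms $000100ab01c1d$ and $111011ab10c0d$ become a manifestly $\mathrm{GL}(3,3)$-invariant union of faces. Concretely this means labeling the coordinates by the even powers $s^{2i}c_0$ of a Singer element $s$, since only such a transversal turns the cyclic shift into a scalar-free monomial automorphism and makes the pure-permutation group come out as the setwise stabilizer of the transversal, of order $39$. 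Once the labeling and the stabilizers are identified, the row counts and the cardinality check are routine finite verifications.
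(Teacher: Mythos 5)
The paper itself gives no argument for this proposition at all --- like the neighbouring theorems it is justified only by an implicit ``computational; direct check'' --- so your structural route (union of $26$ $4$-faces paired by complementation, Lemma~\ref{l:sub} to get the first row and $c_1=1$, an induced $\GL33$-action whose orbit partition is equitable, then one local count per cell) is genuinely different and, in outline, sound. The arithmetic all checks out: $11232=|\GL33|$, the stabilizer orders $27,3,3,39$ match the cell sizes $416,3744,3744,288$ summing to $2^{13}$, your $\tau=(1,2,5)(3,8,10)(4,11,6)(9,13,12)$ really does normalize the $13$-cycle (conjugating it to its cube, consistent with the Frobenius of $\mathrm{GF}(27)/\mathrm{GF}(3)$ on a Singer cycle), and the principle ``transitive on each cell of a preserved partition $\Rightarrow$ equitable'' is correct. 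If completed, this would be a more informative proof than the paper offers.

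But as written the argument has a genuine hole exactly where you say it does, and identifying an obstacle is not the same as removing it. Two specific points. First, the transitivity step is circular: you deduce the stabilizer orders $27,3,3,39$ \emph{from} the orbit sizes, and then propose to ``exhibit the corresponding stabilizers'' to confirm the orbits fill the cells. Exhibiting a subgroup of order $27$ inside the stabilizer of a codeword gives an \emph{upper} bound $|orbit|\le 11232/27=416$, which is the wrong direction; to conclude transitivity on $D$ you need the stabilizer to have order \emph{at most} $27$ (equivalently, $416$ distinct images of one codeword), and similarly for $D^{(1)}$, $D^{(2)}$ where nothing like a natural stabilizer candidate is named. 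You would also first have to verify that the monomial group of the ternary Hamming code acts on $H(13,2)$ faithfully \emph{and preserves $D$} --- which presupposes the identification $D^{(3)}=C$ that is itself only established computationally. Second, the hypothesis of Lemma~\ref{l:sub} (no vertex outside $D$ has two $D$-neighbours), on which the rows $(4,9,0,0)$ and $c_1=1$ rest, is never checked; nor is the disjointness of the $26$ faces, nor the maximality of the order-$39$ permutation group. Every one of these is a finite computation, so the plan is executable, but until they are executed the proposal establishes nothing beyond what the paper's own unwritten ``direct check'' does, and the one deductive shortcut you do spell out (orbit--stabilizer) is stated backwards.
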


There are $78$ order-$288$ subgroups
of $\Aut(H(13,2))$ that act regularly
on~$C$ (existing of such a group means that $C$ is \emph{propelinear}
in the sense of~\cite{RifPuj:1997}).
These subgroups form two conjugate classes, with the representatives given in Table~\ref{tab:0},
\begin{table}[ht]
    \centering
    $
    \begin{array}{l}
g_0=(
1111111111111,
\mathrm{Id}
)
\\
g_1=(0001010011000,
(1,2,5)(3,4,9)(6,10,12)) \\
g_2=(0011000000101,
(1,2,5)(3,9,4)(8,11,13)) \\
g_3=(0001100100000,
(1,4,6,13)(2,9,12,11,5,3,10,8)) \\
g_4=(1011010010000,
(1,6)(2,12)(3,9)(5,10)) \\
\end{array}
\quad
    \begin{array}{l}
g_0=(
1111111111111,
\mathrm{Id}
)
\\
g_1=(
1010000010001,
(1,3,11)(2,9,13)(4,8,5)
)
\\
g_2=(
0011000000101,
(1,2,5)(3,9,4)(8,11,13)
)
\\
g_3=(
0000000011001,
(1,5,8,9,11,13,2,4)(6,7,10,12)
)
\\
g_4=(
0100000000110,
(1,8)(2,11)(5,13)(6,10)
)
\end{array}$
    \caption{Generators of two propelinear structures on $C$}
    \label{tab:0}
\end{table}
where each representative $G$ is given by
generators
$g_0$, $g_1$, $g_2$, $g_3$, $g_4$
of order $2$, $3$, $3$, $8$, $2$,
respectively where
$$G =
\langle g_0 \rangle
\times
\big(
\langle g_1,g_2 \rangle
\rtimes
\langle g_3,g_4 \rangle
\big)
\sim C_2 \times\big((C_3\times C_3)\rtimes \QD{16}\big)
.$$

\begin{theorem}
The maximum-weight codewords
of the Hamming $[13,10,3]_3$ code in $H(13,3)$
form a $\{13,6,1;1,6,9\}$-CR code
in the $H(13,2)$-subgraph of $H(13,3)$ induced by the weight-$13$ vertices.
\end{theorem}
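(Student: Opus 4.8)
The plan is to realize the claimed code explicitly inside the ternary Hamming code and then verify the complete-regularity parameters, ideally by leveraging the already-established Proposition. First I would fix the ternary Hamming $[13,10,3]_3$ code $\mathcal H$ and identify its set $M$ of maximum-weight (weight-$13$) codewords. Since every position is nonzero, each such codeword is a word over the two-symbol subalphabet $\{1,2\}\subset\FF_3$, so $M$ naturally sits inside the $H(13,2)$-subgraph $\Gamma$ induced by the weight-$13$ vertices of $H(13,3)$. The first concrete task is to compute $|M|$ and check it equals the expected cardinality $9\cdot 32 = 288$ coming from the Proposition (where $C=D^{(3)}$ has $9\cdot 32$ elements); this also fixes, by the parity/weight constraints of $\mathcal H$, that $M$ is genuinely a distance-$5$ code in $\Gamma$ consistent with the intersection array $\{13,6,1;1,6,9\}$.

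The main structural step is to show that $M$, under the identification of $\{1,2\}$ with $\{0,1\}$, is equivalent as a binary code to the code $C=D^{(3)}$ from the Proposition. The cleanest route is to exhibit an explicit isometry of $H(13,2)$ (a coordinate permutation composed with symbol flips, i.e. an element of $\Aut(H(13,2))$) carrying $M$ onto $C$. I would use the cyclic structure: the ternary Hamming code of length $13 = (3^3-1)/(3-1)$ is a cyclic code, so $M$ is invariant under a cyclic shift of coordinates, matching the cyclic description of $D$ (and of $C$) in the Proposition. Comparing the $\GL33$-action noted there with the natural $\GL33$-action on the ternary Hamming code (whose automorphism group is $\GL33$ acting on the projective points indexing the coordinates) should make the identification transparent: both codes are single orbits of the same group of order $11232$ acting on $13$ coordinates.

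Once $M$ is shown equivalent to $C$, the theorem follows immediately, because complete regularity and the intersection array are invariants of the code under isometries of the Hamming graph, and these have already been established for $C$ in the surrounding discussion (and are consistent with the displayed weight-distribution table and the Proposition's quotient data). Thus the deduction reduces to: $M \cong C$ as binary codes $\Rightarrow$ $M$ is a $\{13,6,1;1,6,9\}$-CR code.

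The hard part will be the identification $M\cong C$ itself, and specifically matching the two $\GL33$-actions. One must verify that the $\GL33$ acting on the coordinates of the ternary Hamming code induces, after retraction to $\{0,1\}$, exactly the automorphism group of $C$ described in the Proposition, and that $M$ is a single orbit of the correct size rather than splitting. If a clean group-theoretic identification proves awkward, the fallback is a direct verification: produce the explicit generating set of $M$ (the weight-$13$ codewords), map $\{1,2\}\mapsto\{0,1\}$, and check coordinate-permutation equivalence to the generator list $000100ab01c1d$, $111011ab10c0d$ of $D^{(3)}$ given in the Proposition — a finite computation that pins down the isometry and completes the proof.
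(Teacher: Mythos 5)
Your proposal is sound but takes a different route from the paper: the paper's entire proof is a direct computational check (compute the weight-$13$ codewords of the ternary Hamming code, form their distance partition inside the induced $H(13,2)$, and verify the quotient matrix), whereas you reduce the theorem to the preceding Proposition by exhibiting an equivalence between the retracted code $M$ and the explicitly listed code $C=D^{(3)}$, and then transporting complete regularity along that isometry. The logic of your reduction is valid --- the Proposition does establish that $D^{(3)}$ is a $\{13,6,1;1,6,9\}$-CR code, since the last cell of the equitable distance partition of a CR code is itself CR with the reversed intersection array --- and your group-theoretic heuristic for the identification is reasonable: the coordinate permutations of the length-$13$ ternary Hamming code form $\mathrm{PGL}(3,3)$ of order $5616$, and scalar multiplication by $2$ retracts to the all-ones translation of $H(13,2)$, which together account for the order-$11232$ group isomorphic to $\GL33$ in the Proposition. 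What your route buys is a structural explanation (a single orbit of a common group) rather than a bare verification; what it costs is that it still bottoms out in a finite computation (either matching the two $\GL33$-actions carefully or the fallback explicit equivalence check), and it leans on the Proposition, which the paper itself only asserts. Two small corrections: the set $M$ is a distance-$3$ code, not distance-$5$ --- the paper's local weight-distribution table shows $13$ codewords at distance $3$ from any fixed codeword, consistent with $c_1=1$ --- so that consistency check as you phrased it would actually fail; and checking $|M|=288$ alone does not ``fix'' anything about the minimum distance beyond the bound $d\ge 3$ inherited from the ternary Hamming code. Neither slip affects the main argument.
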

\begin{proof}
 The proof is computational; direct check.
\end{proof}

\subsubsection{Uniqueness}

\begin{theorem}\label{th:13unique}
 For each $n=13$,
in $H(n,2)$
there is a unique
(up to equivalence)
$\{13,6,1;1,6,9\}$-CR code.
\end{theorem}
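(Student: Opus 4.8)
The plan is to prove uniqueness by a constrained structural reconstruction of the code, anchored on the strong rigidity that the value $c_1=1$ forces through Lemma~\ref{l:sub}. Since $c_1=1$, that lemma applies directly to $C=C^{(0)}$: the code itself is a disjoint union of faces $H(k,2)$ pairwise at distance at least~$3$, where the common inner degree is $k=n-b_0=13-13=0$. Thus $C$ consists of isolated vertices (a genuine distance-$\ge 3$ code), which already pins down $|C|$ from the weight-distribution bookkeeping. Symmetrically, because $b_{\rho-1}=b_2=9\ne 1$ we cannot use the lemma on $C^{(3)}$ from that side; but the dual viewpoint is supplied by the $\{9,6,1;1,6,13\}$-CR companion code $D$ of the Proposition, whose last cell $D^{(3)}=C$ has $c_3=13$ and whose first cell $D$ has $c_1=1$, so $D$ in turn decomposes into $4$-faces (inner degree $13-9=4$), exactly the $26$ faces forming $13$ antipodal pairs described in the Observations. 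The strategy is therefore to reconstruct $C$ together with the ambient distance partition up to the coordinate/translation action of $\Aut(H(13,2))$, and show every branch leads to the single equivalence class realized by the explicit $D$ of the Proposition.

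First I would fix the local combinatorial data. Using Lemma~\ref{l:loc} with $k=0$, the weight distribution of the colors seen from a fixed codeword is forced to be exactly the displayed $14\times 4$ table; this is a genuine invariant, not a choice. Translating a codeword to $\bar 0$, I read off that $C$ has no weight-$1$ or weight-$2$ words, that the thirteen weight-$3$ neighbors-at-distance-$3$ structure is governed by the $C_3$ entries, and in particular that the $26$ weight-distributed $4$-faces of $C_3$ sit in $13$ antipodal pairs with the pairwise-shared-direction property already established. The next step is to identify the incidence structure carried by these $13$ antipodal pairs of $4$-faces: each pair is determined by a $4$-subset of the $13$ coordinates (the directions of the face) together with a coset representative, and the ``at least one common direction'' condition plus the equitability constants turn the set of thirteen $4$-subsets into a highly structured $2$-design-like configuration on $13$ points. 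I expect this to force the point–block incidence to be that of the projective plane $PG(2,3)$ (thirteen points, thirteen lines, every two blocks meeting), which is exactly where the group $\GL33$ of the Proposition enters.

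With the $PG(2,3)$ incidence skeleton in hand, I would reconstruct the coset representatives (the ``$ab01c1d$'' type data) by propagating the equitable constraints: the quotient matrix $Q$ fixes, for each face in $C_3$ and each direction, how many neighbors lie in $C_2$, and iterating the tridiagonal relations across the distance partition leaves only finitely many completions of the bit-patterns, which I would check collapse under the coordinate-permutation symmetry to the single orbit generated by the two permutations listed in the Proposition. Concretely, one shows any $\{13,6,1;1,6,9\}$-CR code must coincide, after a translation and a coordinate permutation realizing the $PG(2,3)$ automorphism, with the cyclic-orbit code $D^{(3)}=C$; uniqueness then follows because $\Aut(H(13,2))$ acts and the reconstructed object is rigid up to that action.

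The main obstacle I anticipate is the combinatorial explosion in the middle reconstruction step: even after the $PG(2,3)$ incidence is forced, the coset bits $a,b,c,d$ across all thirteen antipodal pairs have a large a priori search space, and showing that the equitability constraints (rather than ad hoc case analysis) cut this down to a single equivalence class is delicate. In practice I would discharge this either by a carefully organized hand argument exploiting the transitivity of the putative symmetry group to normalize several bits at once, or—more honestly for a result of this flavor—by an exhaustive but provably complete computer search over the finitely many completions compatible with the forced local data, reporting that exactly one equivalence class survives. The cleanest logical spine, and the part I am most confident about, is the reduction via the $c_1=1$ rigidity and Lemma~\ref{l:loc}: these two facts turn a search over all codes into a search over decorations of a nearly-forced design, and that reduction is what makes uniqueness tractable.
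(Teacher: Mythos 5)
Your overall strategy---rigidify the structure via Lemma~\ref{l:sub} and then finish with a provably exhaustive finite search---is the same family of argument as the paper's, but two concrete points need fixing. First, you misread the intersection array: in $\{13,6,1;1,6,9\}$ we have $b_2=1$ and $c_3=9$, so the remark after Lemma~\ref{l:sub} applies \emph{directly} to $C^{(3)}$ (with inner degree $n-c_3=4$), giving the $26$ pairwise-distant $4$-faces without any detour through the companion code $D$. Leaning on ``the $D$ of the Proposition'' for this step is also logically dangerous: in a uniqueness proof you cannot presuppose that the reversed partition is the specific explicit object you are trying to show is unique.

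Second, and more seriously, your load-bearing structural claim---that the $13$ direction-sets of the antipodal pairs of $4$-faces are forced to be the lines of $PG(2,3)$---is only ``expected,'' not proved, and the stated justification does not suffice: a family of $13$ pairwise-intersecting $4$-subsets of a $13$-set need not be a projective plane (e.g., a sunflower through a fixed point). Everything downstream (normalizing by $\GL33$, the claim that the remaining bit-choices collapse to one orbit) rests on this unproved step, so the proposal as written has a genuine gap. The paper anchors the search elsewhere: by the weight-distribution formulas there are exactly $13$ weight-$3$ words in $C$ and $13$ in $C^{(3)}$, and these $26$ words form a constant-weight $(13,3,4)$ code of size $26$, i.e., an STS$(13)$; there are exactly two such systems up to equivalence. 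For each, one enumerates the candidate $4$-faces through the weight-$3$ words and runs a clique search in the compatibility graph for $13$ mutually distance-$\ge 3$ faces; one STS admits a unique solution and the other none, the remaining $13$ faces follow by antipodality, and $C$ is then determined as the distance-$3$ shell of $C^{(3)}$. If you replace your $PG(2,3)$ step with this STS$(13)$ reduction (or else actually prove the projective-plane structure is forced), your outline becomes a complete plan.
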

\begin{proof}
For $n=13$, the proof involves computation.
We start with some observations.
Let $(C,C^{(1)},C^{(2)},C^{(3)})$
be the distance partition of the vertex set of $H(13,2)$ with respect to a
$\{13,6,1;1,6,9\}$-CR code $C$.
W.l.o.g., we assume $\bar 0\in C$.
By Lemma~\ref{l:sub}, $C^{(3)}$ consists
of $26$ connected components, $4$-faces.
Each such face is at distance~$3$ from~$C$
and at least~$3$ from any other such face.
By weight-distribution formulas
(see e.g.~\cite{Kro:struct}), we find
that there are exactly~$13$
weight-$3$ codewords of~$C$ and~$13$
weight-$3$ codewords of~$C^{(3)}$.
Since all these $26$ words are at distance at least~$3$ from each other (actually, at least~$4$ because they have the same weight),
they form a constant-weight binary code with following parameters: length~$13$, weight~$3$, minimum Hamming distance~$4$, size~$26$. It is known that there are exactly~$2$ nonequivalent such codes (the supports of the codewords form a configuration known as a Steiner triple system of order~$13$, STS$(13)$).
For each code $D$ from these two codes, we apply the following algorithm.

Firstly, we construct the set of candidates for faces containing a weight-$3$ word. Each such a candidate contains exactly~$1$ codeword of~$D$ and is at distance at least~$3$ from any other codeword of~$D$.

Secondly, in the set $S$ of candidates
we need to find $13$ faces at distance at least~$3$ from each other.
To do this we construct the \emph{compatibility graph} on~$S$,
where two faces are adjacent if and only if the distance between them is at least~$3$,
and search for a clique of size~$13$
in this graph (see \cite{cliquer}).
It happens that there is a unique solution
for one of the codes~$D$ and no solutions for the other.
The remaining $13$ faces are uniquely
reconstructed using the self-complementary property of equitable partitions.
Having $C^{(3)}$, we uniquely find the distance partition $(C^{(3)},C^{(2)},C^{(1)},C)$ with respect to it.

\end{proof}

\subsection{\{10,2;1,8\}-CR codes in H(5+,3)}

\begin{theorem}
The maximum-weight codewords
of the Hamming $[5,3,3]_4$ code in $H(5,4)$
form a $\{10,2;1,8\}$-CR code
in the $H(5,3)$-subgraph of $H(5,4)$ induced by the weight-$5$ vertices.
\end{theorem}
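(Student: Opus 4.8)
The plan is to reduce the statement to a finite, symmetry-organized verification. First I would fix an explicit parity-check matrix of the Hamming code, e.g.
$$H=\begin{pmatrix} 1&0&1&1&1\\ 0&1&1&\omega&\omega^2\end{pmatrix}$$
over $\FF_4=\{0,1,\omega,\omega^2\}$, so that $C=\{x:Hx^\top=0\}$ is a $[5,3,3]_4$ code, and let $M$ be its set of maximum-weight (weight-$5$) codewords. Applying MacWilliams duality to the simplex code $C^\perp$ (all $15$ nonzero words, each of weight $4$) gives the weight enumerator $\frac1{16}\big[(x+3y)^5+15(x+3y)(x-y)^4\big]$ of $C$, hence weight distribution $(1,0,0,30,15,18)$; in particular $|M|=18$. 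Since every symbol of a weight-$5$ word is nonzero, the subgraph induced on the weight-$5$ vertices is exactly $H(5,3)$ on the alphabet $\FF_4^{*}$, and $M$ sits inside it.

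Next I would write down the intended quotient matrix $\begin{pmatrix}0&10&0\\ 1&7&2\\ 0&8&2\end{pmatrix}$ and separate the entries that come for free. Two distinct codewords differ by a nonzero codeword, so their $H(5,3)$-distance is $\ge 3$; hence no two points of $M$ are adjacent (first row $(0,10,0)$) and every vertex at distance $1$ from $M$ has exactly one neighbour in $M$ (so $c_1=1$). The relations $|M|\,b_0=|M^{(1)}|\,c_1$ and $|M^{(1)}|\,b_1=|M^{(2)}|\,c_2$ then force $|M^{(1)}|=180$ and $|M^{(2)}|=45$, and $18+180+45=243=3^5$ confirms that the covering radius is $2$ and that every vertex is accounted for. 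As a feasibility check, the eigenvalues $10,1,-2$ of the quotient matrix all occur among the eigenvalues $10-3i$ of $H(5,3)$.

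What remains, and is the heart of the matter, is the regularity of the two outer cells: every $v\in M^{(1)}$ should have exactly $b_1=2$ neighbours in $M^{(2)}$, and every $w\in M^{(2)}$ should have exactly $c_2=8$ neighbours in $M^{(1)}$. Neither follows from the minimum distance, so I would try to obtain them from symmetry. The semilinear monomial automorphisms of $C$ — the global scalars $\FF_4^{*}$, the coordinate action of $\mathrm{PGL}(2,4)\cong A_5$ on the five projective points, and the Frobenius $x\mapsto x^2$, together a copy of $\mathrm{\Gamma L}(2,4)$ of order $360$ — all fix $0$ in each coordinate, hence descend to automorphisms of $H(5,3)$ that preserve $M$ and therefore each cell $M^{(i)}$. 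If this group acts transitively on each of $M^{(1)}$ and $M^{(2)}$, the relevant neighbour counts are automatically constant, the partition is equitable, and one simply reads $b_1$ and $c_2$ off one representative of each class.

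The main obstacle is exactly this transitivity (or, failing it, the counts directly). Orbit--stabilizer counting gives the right divisibility — point stabilizers of orders $20$, $2$, $8$ on the cells of sizes $18$, $180$, $45$ — but divisibility is not transitivity, and I see no a priori conceptual reason forcing the $180$- and $45$-point cells to be single orbits; checking this is a finite computation, best delegated to a machine, just as in the companion $\{13,6,1;1,6,9\}$ theorem. If transitivity fails on some cell, the fallback is a direct count, which does simplify: a neighbour $u$ of $w\in M^{(2)}$ lies in $M^{(1)}$ precisely when $u$ is obtained by changing one coordinate of $w$ to match a codeword at distance $2$, and two distinct such codewords cannot produce the same $u$ (that would give $u$ two codewords at distance $1$, contradicting $c_1=1$); hence $c_2=8$ is \emph{equivalent} to the statement that every distance-$2$ vertex has exactly four codewords of $M$ at distance $2$, and an analogous bookkeeping reduces $b_1=2$ to a count among the eight ``lateral'' neighbours of a distance-$1$ vertex. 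Either way, the genuine work of the proof is this outer-cell regularity.
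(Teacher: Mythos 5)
Your proposal is correct and ultimately rests on the same ground as the paper, whose entire proof is ``computational; direct check'': the preliminary reductions you give (the weight distribution $(1,0,0,30,15,18)$, the minimum-distance consequences $b_0$-row $=(0,10,0)$ and $c_1=1$, and the cell-size bookkeeping $18+180+45=243$) are all sound, but the decisive step --- constancy of the neighbour counts on $M^{(1)}$ and $M^{(2)}$ --- is, as you yourself say, delegated to a finite machine verification. So this is essentially the paper's approach with the easy parts usefully factored out in advance.
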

\begin{proof}
  The proof is computational; direct check.
\end{proof}

The $\{10,2;1,8\}$-CR code in $H(5,4)$ is not
new; it was previously found in
the database \url{http://www.pietereendebak.nl/oapackage/series.html} \cite{SEN:2010:OA} of orthogonal arrays as one (and only one) of $10$ nonequivalent OA$(18,5,3,2)$
(in particular, this means that the $\{10,2;1,8\}$-CR code in $H(5,3)$ is unique).

\begin{remark}
   The set of maximum-weight codewords
of the extended Hamming $[6,3,4]_4$ code in $H(6,4)$
is not a CR code in $H(6,3)$; it is the first cell of an equitable partition with quotient matrix
$
\begin{pmatrix}
0 &12 & 0 & 0 & 0 \\
1 & 1 & 5 & 5 & 0 \\
0 & 6 & 0 & 6 & 0 \\
0 & 4 & 4 & 2 & 2 \\
0 & 0 & 0 &12 & 0
\end{pmatrix}.
$
\end{remark}

\section{Classification with local-reconstruction algorithm}\label{s:loc}
In this section, we describe the results of the classification of some CR codes
by the approach, similar to the one in~\cite[Sect.~4]{Kro:OA13}.
In the current work, the algorithm was
adopted for the classification
of CR codes with covering radius~$2$
(comparing with covering radius~$1$ in~\cite{Kro:OA13}).
With this algorithm, described below,
we establish the nonexistence of
$\{11,4;3,6\}$-CR codes in $H(12,2)$ and $H(13,2)$
and $\{11,4;3,6\}$-CR codes in $H(12,2)$
 (Section~\ref{s:11,4}).
 Additionally, to test the algorithm of some existing objects,
 we classify $\{10,5;3,6\}$-CR codes in $H(13,2)$.

The classification approach is based on the concept of local pairs.
Let $S$ be the quotient matrix of the considered putative CR codes.
First we fix the number $d\in\{0,1,2\}$, which corresponds to the distance from the all-zero word $\bar 0$ to the putative completely regular code we are going to construct. We say a pair of disjoint
sets of sets $P_0,P_2 \subset \{0, 1\}^{n}$
is an $(r_0,r_2)$-local pair if
\begin{itemize}
\item[(I)] (locality condition)
$P_i$ consists of words of weight $\le r_i$, $i=0,2$;

\item[(II)] (up-to-equivalence condition)
for $i \in\{0,2\}$,
$\bar 0\in P_i $ if and only if $d = i$;

\item[(III')] (exact cover condition) the neighborhood of every vertex $\vx v$ of weight
less than $r_0$ satisfies the local condition from the definition of an $S$-CR: if
$\vx v \in P_0$, then $\vx v$ has $S_{0,0}$ neighbors in $P_0$; if $\vx v \in P_2$, then $\vx v$ has
$S_{2,0}=0$ neighbors in $P_0$; if
$\vx v \not\in P_0\cup P_2$, then $\vx v$ has $S_{1,0}$ neighbors in $P_0$;
\item[(III'')] (exact cover condition) the neighborhood of every vertex $\vx v$ of weight
less than $r_2$ satisfies the local condition from the definition of an $S$-CR: if
$\vx v \in P_2$, then $\vx v$ has $S_{2,2}$ neighbors in $P_2$; if $\vx v \in P_0$, then $\vx v$ has
$S_{0,2}=0$ neighbors in $P_2$; if
$\vx v \not\in P_0\cup P_2$, then $\vx v$ has $S_{1,2}$ neighbors in $P_2$.
\end{itemize}

The general algorithmic approach is the following:
we choose $d$ and find the unique, up to equivalence, $(1,1)$-local pair.
On the next step, by solving the corresponding exact-covering problem with multiplicities by \texttt{libexact} \cite{KasPot08} (the details are similar to those in \cite{Kro:OA13}), we found all $(1,2)$-local (or $(2,1)$-local) pairs
that continue that unique $(1,1)$-local pair
and classify them up to equivalence (using the graph-isomorphism software \cite{nauty2014};
this step is also called \emph{isomorph rejection}).
Next, for each of the found
nonequivalent representatives,
we find all its $(2,2)$-local continuations,
then, after isomorph rejection,
 $(2,3)$- or  $(3,2)$-local continuations,
 and so on.
 Finally, as $(n,n)$-local pairs, we find all nonequivalent equitable
 partitions $(P_0,\{0,1\}^n\setminus(P_0\cup P_2),P_2)$ with quotient matrix~$S$.

 The approach, apart of the choice
 of $d$ suggests that we can choose
 between parameters $(1,2)$ and $(2,1)$,
 $(2,3)$ and $(3,2)$, $(3,4)$ and $(4,3)$, and so on, of local partitions at the corresponding step.
 Sometimes, one choice result in much faster calculations than the other,
 or even only one can be finished in reasonable time.
 To compare the timing of computation,
 different alternative choice were done
 during the study.
 This resulted in redundant but more informative computation, the results of which are shown in the tables below.


\subsection{\{11,4;3,6\}-CR and \{10,3;4,7\}-CR codes}
\label{s:11,4}
In this section, we establish the nonexistence,
in the binary Hamming graphs,
of CR codes with quotient matrices
$$
U_{12}=\begin{pmatrix}1&11&0\\4&5&3\\0&6&6\end{pmatrix},
\quad
U_{13}=\begin{pmatrix}2&11&0\\4&6&3\\0&6&7\end{pmatrix},
\quad
T_{12}=\begin{pmatrix}2&10&0\\4&5&3\\0&7&5\end{pmatrix},
\quad
T_{13}=\begin{pmatrix}3&10&0\\4&6&3\\0&7&6\end{pmatrix}.
$$
\begin{theorem}[computational]
There are no $\{11,4;3,6\}$-CR codes ($\{10,3;4,7\}$-CR codes)
in $H(12,2)$ and $H(13,2)$, with quotient matrix $U_{12}$ and $U_{13}$ ($T_{12}$ and $T_{13}$), respectively.
\end{theorem}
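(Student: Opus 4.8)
The statement is a nonexistence result, so the plan is to run the local-reconstruction algorithm of Section~\ref{s:loc} to completion for each of the four quotient matrices $U_{12}$, $U_{13}$, $T_{12}$, $T_{13}$ in its graph $H(12,2)$ or $H(13,2)$, and to verify that in every case the search returns no equitable $3$-partition with the prescribed quotient matrix. Each of the four matrices is tridiagonal with covering radius $\rho=2$, so the objects sought are precisely the codes $P_0$ occurring in equitable partitions $(P_0,P_1,P_2)$ with $P_1=\{0,1\}^n\setminus(P_0\cup P_2)$. For a nonexistence proof it is enough to fix a single value of the parameter $d$, since every vertex of a putative partition can be translated into the corresponding cell (all three cells are nonempty for covering radius~$2$); I would pick the $d$ whose seed is most constrained and, as an independent cross-check, rerun the search for another value of~$d$. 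Fixing $d$ together with the first rows and columns of $S$ pins down, uniquely up to a coordinate permutation, the $(1,1)$-local pair $(P_0,P_2)$ supported on weights at most~$1$, which serves as the seed.

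From this seed I would grow the pair one unit of radius at a time, $(1,1)\to(1,2)\to(2,2)\to\cdots\to(n,n)$, or through the symmetric alternatives $(2,1),(3,2),\dots$ where that branch is cheaper. At each step the admissible extensions are exactly the solutions of an exact-cover-with-multiplicities instance encoding conditions~(III') and~(III''): every vertex all of whose neighbors already lie within the current radii must receive precisely $S_{i,0}$ neighbors in $P_0$ and $S_{i,2}$ in $P_2$, where $i$ is the cell containing that vertex. I would solve these instances with \texttt{libexact}~\cite{KasPot08} and, after each step, reduce the surviving partial pairs to nonequivalent representatives with \texttt{nauty}~\cite{nauty2014}. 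To prune early I would precompute, from $n$, $q=2$ and $S$ alone, the exact weight distribution of each cell of the distance partition (these counts are forced by the weight-distribution formulas, see~\cite{Kro:struct}) and discard any partial pair whose weight counts inside the current radii already disagree with the forced values. Once the radius reaches $n$, the $(n,n)$-local pairs coincide with the global equitable partitions of quotient matrix~$S$, so the enumeration is exhaustive.

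The outcome I expect, and the content of the theorem, is that for each of $U_{12}$, $U_{13}$, $T_{12}$, $T_{13}$ the search tree terminates with an empty set of $(n,n)$-local pairs, certifying that no such CR code exists. The main obstacle is the size of the search rather than any single hard step: the number of nonequivalent partial local pairs can blow up at the intermediate radii (around weights $3$ to $6$), so feasibility rests on effective isomorph rejection to tame the branching, on choosing well at each level between the two symmetric extension orders (one can be far cheaper than the other, and for some of the matrices only one finishes in reasonable time), and on the raw speed of the exact-cover solver. The one point needing theoretical care is completeness: one must be certain that conditions~(I)--(III'') capture \emph{every} restriction of a genuine equitable partition, so that an empty final list truly certifies nonexistence and is not merely an artifact of a particular extension order.
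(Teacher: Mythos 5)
Your proposal is essentially the paper's own argument: the theorem is proved by running exactly the local-reconstruction algorithm of Section~\ref{s:loc} (seed with the unique $(1,1)$-local pair for a fixed $d$, extend via exact cover with \texttt{libexact}, isomorph rejection with \texttt{nauty}, alternating between the $(r,r+1)$ and $(r+1,r)$ orders for efficiency) until the $(n,n)$-local pairs are enumerated and found to be empty, as recorded in the tables of Section~\ref{s:11,4}. Your observation that fixing a single $d$ suffices for nonexistence, and your extra weight-distribution pruning, are consistent with (and in the first case implicitly used by) the paper, so this is the same route.
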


\begin{tabular}{r|r@{\ }c|r@{\ }c|r@{\ }c|r@{\ }c|r@{\ }c}
&\multicolumn{8}{c|}{$\{11,4;3,6\}$-CR}
\\ \hline
& \multicolumn{2}{c|}{$n=12$, $\bar 0 \in C^{(0)}$}
& \multicolumn{2}{c|}{$n=12$, $\bar 0 \in C^{(2)}$}
& \multicolumn{2}{c|}{$n=13$, $\bar 0 \in C^{(0)}$}
& \multicolumn{2}{c|}{$n=13$, $\bar 0 \in C^{(1)}$}
\\ \hline
$(2,1)$-local
& $6$ & \em 13''
& $2$ & \em 0''
& $26$ & \em 6'
& $68$ & \em 1''
\\
$(1,2)$-local
& $266$ & \em 50h
& $+$ &
& $+$ &
& $77$ & \em 6''
\\
$(2,2)$-local
& $1523071$ & \em 30', 7'
& $970415$ & \em 80h
& $32546865$ & \em 4h
& $86157$ & \em 42'', 23''
\\
$(3,2)$-local
& $4$ & \em 30'
& $0$ & \em 1.2h
& $668029757$ & \em 180d
& $1855152$ & \em 22'
\\
LP
&   &
&   &
& $100261^{**}$ & \em 7d
& $14063^{**}$ & \em 2.5h
\\
$(2,3)$-local
& $14719$ & \em 1.2h
& $+$ &
&$+$&
&$+$&
\\
$(3,3)$-local
& $1$ & \em 0'', 25''
& $-$ &
& $177197$ & \em 16h
&$ 70291123 $ &  \em 145d
\\
LP
& $0^{*}$, $0^{**}$  &  \em 0'', 0''
& &
& $0^{*}$, $0^{**}$  & \em 3', 5'
\\ \hline
\end{tabular}

\begin{tabular}{r|r@{\ }c|r@{\ }c|r@{\ }c|r@{\ }c|r@{\ }c}
&\multicolumn{4}{c|}{$\{10,3;4,7\}$-CR}
\\ \hline
& \multicolumn{2}{c|}{$n=12$, $\bar 0 \in C^{(0)}$}
& \multicolumn{2}{c|}{$n=13$, $\bar 0 \in C^{(1)}$}
\\ \hline
$(2,1)$-local
& $161$ & \em 1h
& $1918$ & \em 3'
\\
$(1,2)$-local
& $21$ & \em 3'
& $17$ & \em 0''
\\
$(2,2)$-local
& $1646997$ & \em 2', 6'
& $511808 $ & \em 1', 25'
\\
LP
&  &
& $248505^*$, $215^{**}$ & \em 1.5h, 9'
\\
$(3,2)$-local
& $1299034$  & \em 1h
&$+$&
\\
$(2,3)$-local
&$287$& \em 1h
& $12771$ & \em 25'
\\
$(3,3)$-local
& $27$ & \em 35', 0'
& $ 437148986$ & \em 25h
\\
LP
& $0^{*}$, $0^{**}$  &  \em 0'', 0''
& $0$  &  \em 5d
\\ \hline
\end{tabular}

\subsection{\{10,5;3,6\}-CR codes in H(13,2)}
\label{s:10,5}
To establish the nonexistence
of some covering-radius CR codes in Section~\ref{s:11,4},
we use the algorithm of
weight-by-weight reconstruction
of the perfect coloring corresponding
to a completely regular code.
While the approach is not new, such an algorithm was never applied before
for the classification of CR codes with covering radius more than~$1$.
There are approaches to verify
the results of the classification
(for example by double-counting based on the orbit--stabilizer theorem,
see~\cite[Sect. 10.2]{KO:alg})
if the classified class is not empty.
For this reason,
we firstly classify CR codes with intersection array $\{10,5;3,6\}$,
which are known to exist in $H(13,2)$
(and also in $H(12,2)$, but $13$ is the
highest value of~$n$ for which the classification by the considered approach
is possible with reasonable computational
resources).

\begin{tabular}{r|r c | r c | r c | r c }
&\multicolumn{4}{c|}{$\{10,5;3,6\}$-CR}
\\ \hline
& \multicolumn{2}{c|}{$n=12$, $\bar 0 \in C^{(0)}$}
& \multicolumn{2}{c|}{$n=13$, $\bar 0 \in C^{(0)}$}
\\ \hline
$(2,1)$-local
& $20$ & \em 30sec
& $125$ & \em 35min
\\
$(1,2)$-local
& $+$ &
& $+$ &
\\
$(2,2)$-local
& $49192$ & \em 10sec
& $3617224$ & \em 3min
\\
$(3,2)$-local
& $261$ & \em 1min
& $33898933$ & \em 18h
\\
$(2,3)$-local
& $35693$ & \em 1min
& $+$ &
\\
$(3,3)$-local
& $240$ & \em 0, 12sec
& $1774384$ & 7days
\\
$(4,3)$-local
& $51$ & \em 0sec
& $12290300$ &  \em 16h
\\
$(3,4)$-local
& $44$ &  \em 15sec
& $+$&
\\
$(4,4)$-local
& $44$ & \em 0sec
& $4988624$ &  \em 4.5days
\\
$(5,4)$-local
& $1$ & \em 0sec
& $1449$ &  \em 1.5days
\\
$(5,5)$-local
& $1$ & \em
& $1449$ & \em 32sec
\\
$(6,5)$-local
& $1$ & \em
& $31$ & \em 21sec
\\
$(\ge 6,\ge 6)$-local
& $1$ & \em
& $31$ & \em 1sec
\\ \hline
\end{tabular}

Testing final solutions for isomorphism,
we find $9$ nonequivalent equitable partitions.
Since the number of
$\begin{pmatrix}2&5\\4&3\end{pmatrix}$-CR codes in $H(7,2)$
is also $9$, by Cartesian product with the unique
$\begin{pmatrix}1&5\\3&3\end{pmatrix}$-CR codes in $H(7,2)$
we obtain all $9$ $\{10,5;3,6\}$-CR codes in $H(13,2)$.

\section{Partition of the complement of the 4-ary Hamming code}

From \cite{BKMTV} (end of Sect. 4.6), we know
\begin{proposition}
There are $54$ partitions of the complement
of the Hamming $[5,3,3]_4$ code in $H(5,4)$
into $4$-cliques, forming one equivalence class, with the automorphism group order~$8$
of each partition.
\end{proposition}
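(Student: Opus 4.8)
The plan is to first reduce the problem to a purely combinatorial exact-cover problem and then to settle it by an exhaustive but small computer search, followed by an orbit computation. The key preliminary point is to pin down what the blocks of such a partition must look like. In any Hamming graph a clique is contained in a single \emph{line} (a set of vertices agreeing in all but one coordinate), since if two vertices differ in coordinate~$i$ and two differ in coordinate~$j\neq i$, they cannot all be pairwise adjacent. As $q=4$ here, a $4$-clique is therefore exactly a full line $\{v+s e_i:s\in\FF_4\}$. A line meets the Hamming code $C$ in at most one point (two codewords on a line would be at Hamming distance $1<3$), so I split the $5\cdot 4^4=1280$ lines into the $5\cdot 64=320$ lines through a codeword and the $960$ \emph{codeword-free} lines. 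A block of the partition lies entirely in the complement $\FF_4^5\setminus C$, hence must be a codeword-free line; and since $|\FF_4^5\setminus C|=960$, a partition consists of exactly $240$ pairwise disjoint codeword-free lines covering the complement.

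Next I would record the local incidence data that makes the search finite and rigid. As $C$ is perfect with covering radius~$1$, every non-codeword $v$ lies at distance~$1$ from a unique codeword, differing from it in a single coordinate~$j$; the line through $v$ in direction~$j$ is then the \emph{only} line through $v$ meeting $C$ (a second such line would produce a second nearest codeword). Consequently each of the $960$ complement vertices lies on exactly $4$ codeword-free lines, so the incidence structure of complement vertices versus codeword-free lines is $4$-regular and $4$-uniform. A partition into $4$-cliques is then precisely an exact cover of the complement by this block family. I would feed this exact-cover instance to a solver such as \texttt{libexact} or a DLX backtracking routine, normalizing by placing $\bar 0\in C$ and fixing the block covering one canonical vertex to prune the tree, and enumerate the complete solution list.

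Finally I would organize the solutions up to symmetry. The relevant group is the stabilizer $\Aut(C)$ of $C$ in the isometry group of $H(5,4)$: any such isometry permutes the codeword-free lines and hence sends partitions to partitions. Encoding each partition as a vertex-coloured graph and running a canonical-form tool (\texttt{nauty}) would cluster the solutions into equivalence classes and compute, for a representative, its setwise stabilizer; this is what is expected to yield the single equivalence class and the automorphism group of order~$8$ claimed in the statement.

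The main obstacle is not any single step but guaranteeing \emph{completeness and correctness of the enumeration}: one must be certain the exact-cover search is genuinely exhaustive, and that the graph encoding faithfully captures equivalence, so that the counts ($54$ partitions, one class, stabilizer order~$8$) are proved rather than merely observed. Because the instance is small ($960$ points and $960$ blocks), the search is computationally comfortable; as a safeguard I would cross-check the final tally by a double-counting argument based on the orbit--stabilizer theorem (in the spirit of the verification used for the $\{10,5;3,6\}$-CR codes above), so that the enumerated total and the computed stabilizer orders are mutually consistent.
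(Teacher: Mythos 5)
Your reduction is sound as far as it goes: $4$-cliques in $H(5,4)$ are exactly the lines, a line meets the distance-$3$ code in at most one point, the complement has $960$ vertices each lying on exactly $4$ of the $960$ codeword-free lines, and a partition into $4$-cliques is an exact cover by $240$ such lines. This is precisely the computation the paper actually performs --- but for Theorem~\ref{th:cli}, not for this Proposition. The paper offers no proof of the Proposition at all: it is imported from \cite{BKMTV}. The real problem is that your plan, carried out faithfully, would not confirm the stated numbers: the paper's own \texttt{libexact} enumeration of exactly the exact-cover instance you describe (Theorem~\ref{th:cli}) yields $168960$ partitions in $11$ equivalence classes with stabilizer orders $1,1,1,1,1,1,2,3,6,6,6$ --- not $54$ partitions in one class with stabilizer order $8$. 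So the exhaustive search you propose, with equivalence taken under the stabilizer of the code in $\Aut(H(5,4))$ as you specify, refutes rather than proves the statement as literally written.

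Your own proposed sanity check already exposes this. A single equivalence class of $54$ partitions, each with automorphism group of order $8$, forces the acting group to have order $54\cdot 8=432$ by orbit--stabilizer; but the counts of Theorem~\ref{th:cli} force that group to have order $23040$ (since $\sum_{\text{classes}}1/|\mathrm{stab}| = 6+\tfrac12+\tfrac13+\tfrac36=\tfrac{22}{3}$ and $23040\cdot\tfrac{22}{3}=168960$), which is indeed the order of the stabilizer of the $[5,3,3]_4$ Hamming code in $\Aut(H(5,4))$ ($64$ translations times $|\mathrm{\Gamma L}(2,4)|=360$). The two statements therefore cannot both be complete enumerations under the same group: the Proposition must concern a restricted family of partitions or equivalence under a much smaller group, as in the source \cite{BKMTV}, and it is not recoverable by the complete enumeration you outline. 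If you are asked to justify this Proposition, the honest answer is that it rests on the cited computation in \cite{BKMTV} and is superseded (and, under the natural reading, contradicted) by Theorem~\ref{th:cli}.
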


By direct computation with the \texttt{libexact} library~\cite{KasPot08},
we find the following:
\begin{theorem}\label{th:cli}
There are $168960$ partitions of the complement
of the Hamming $[5,3,3]_4$ code in $H(5,4)$
into $4$-cliques, forming $11$ equivalence classes, with the automorphism group order
$1$, $1$, $1$, $1$, $1$, $1$,
$2$, $3$, $6$, $6$, $6$
of partitions in each equivalence class, respectively.
\end{theorem}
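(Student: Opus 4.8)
The plan is to recast the statement as the enumeration of the solutions of a single exact-cover instance, followed by classification under the symmetry group of the code. First I would pin down the admissible blocks. Because $q=4$, any three pairwise-adjacent vertices of $H(5,4)$ already lie in a common coordinate direction, so the maximal cliques are exactly the \emph{lines}: sets of $4$ words that agree in four coordinates and run through all of $\FF_4$ in the fifth, and every $4$-clique is a full such line. A line meets the Hamming $[5,3,3]_4$ code in at most one point, since two codewords are at distance $\ge 3$; and as the code is perfect, each of the $1024\cdot 5/4=1280$ lines either avoids the $64$ codewords or contains exactly one of them. Counting, the $64\cdot 5=320$ lines through codewords are distinct, leaving exactly $1280-320=960$ lines disjoint from the code. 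Dually, each of the $960$ non-codewords lies on precisely $4$ of these lines: its fifth line, in the direction of its unique nearest codeword, is the only one hitting the code. A partition of the complement into $4$-cliques is therefore exactly an exact cover of the $960$ non-codewords by these $960$ size-$4$ blocks, using $240$ of them.

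Next I would hand this exact-cover instance to \texttt{libexact}~\cite{KasPot08} and enumerate \emph{all} of its solutions; this produces the total count $168960$. The instance is small and very regular (every one of the $960$ elements lies in exactly $4$ of the $960$ blocks), so the search is expected to terminate quickly and exhaustively.

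To obtain the equivalence classes I would act with the group $G=\Aut(H(5,4),C)$ of isometries of the Hamming graph stabilizing the code $C$ (equivalently, its complement), encode each partition as a labeled graph, compute a canonical form with \texttt{nauty}~\cite{nauty2014}, and perform isomorph rejection on the $168960$ partitions. This sorts them into $11$ orbits; the setwise stabilizer of one representative per orbit yields the automorphism orders $1,1,1,1,1,1,2,3,6,6,6$.

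Finally, as an independent check I would verify the orbit--stabilizer identity $\sum_{\text{orbits}}|G|/|\Aut|=168960$ (cf.~\cite[Sect.~10.2]{KO:alg}). Here $G$ decomposes as the $64$ translations by codewords extended by the semilinear monomial group $\Gamma\mathrm{L}(2,4)$ of order $360$, so $|G|=64\cdot 360=23040$; the eleven terms then sum to $6\cdot 23040+11520+7680+3\cdot 3840=168960$, confirming that nothing was lost or double-counted. The only real difficulty is bookkeeping rather than mathematics: correctly modeling $G$ and its action so that the isomorph rejection and the orbit--stabilizer count agree exactly. The exact-cover enumeration itself, and the two derived counts ($168960$ and the class sizes), are routine once the reduction and the group are set up correctly.
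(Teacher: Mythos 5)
Your proposal is correct and matches the paper's proof, which is exactly a direct computation with \texttt{libexact} (the reduction to an exact cover by the $960$ code-free lines, followed by isomorph rejection and an orbit--stabilizer consistency check with $|G|=23040$, is the standard pipeline the paper uses throughout Section~\ref{s:loc} and here). Your preparatory counts ($1280$ lines, $320$ through codewords, each non-codeword on exactly $4$ admissible lines) and the verification $6\cdot 23040+11520+7680+3\cdot 3840=168960$ are all accurate.
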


In \cite{BKMTV}, such partitions are used to construct CR codes with covering radius~$1$.

\section{\{24,21,10;1,4,12\}-CR codes
in H(8,4) and H(24,2)
}

Intersection array $[24,21,10;1,4,12]$ corresponds
to the quotient matrix
$$
\left(
\begin{array}{cccc}
 0 & 24 & 0 & 0 \\
 1 & 2  & 21 & 0 \\
 0 & 4  & 10 & 10 \\
 0 & 0 & 12 & 12
\end{array}
\right).
$$
A putative binary completely regular code $C$ with this intersection array
has minimum distance $3$ and size~$2^{16}$,
but it cannot be linear (nor $Z_4$- or $Z_2Z_4$-linear) because
there is no a distance-regular graph with this intersection array.
The $8$ weight-$3$ words of $C$ form a $S(24,3,1)$ Steiner system,
i.e., a partition of the coordinated into triples.
The $42$ weight-$4$ words of $C$ form a group divisible design, GDD,
with the $8$ groups of size $3$ correspoding to the weight-$3$
words. It is known that such GDD exist.
The weight distribution of $C$ is
\begin{align*}
  [1, 0, 0, 8, 42, 168, 448, 1344, 3279, 5120, 6720, 9744, 11788, \\9744, 6720, 5120, 3279, 1344, 448, 168, 42, 8, 0, 0, 1].
\end{align*}
The dual weight distribution is
$$ [1, 0, 0, 0, 0, 0, 0, 0, 42, 0, 0, 0, 168, 0, 0, 0, 45, 0, 0, 0, 0, 0, 0, 0, 0]. $$

\subsection{\{24,21,10;1,4,12\}-CR codes in H(8,4) do not exist}

One of the ways to construct a CR code
in $H(3n,2)$ is to construct a CR code with the same intersection array in
$H(n,4)$ and then use graph covering $H(3n,2) \to H(n,4)$
(or, alternatively, concatenation).

A putative $\{24,21,10;1,4,12\}$-CR code $C$
has code~distance $4$, weight distribution = dual weight distribution
$$ [1, 0, 0, 0, 42, 0, 168, 0, 45] $$
(there is a linear self-dual code with this weight distribution;
namely, the binary $1$st order Reed--Muller $(8,16,4)$ code lifted to GF$(4)$,
but it is not completely regular).
The codewords of weight~$4$ form a kind of $2$-design~$C_4$ in the sense
that every weight-$2$ word from $H(8,4)$ is at distance~$2$ from exactly one
element of~$C_4$.
Such designs exist (I have found at least~$2$ nonequivalent, invariant under multiplication by a constant from GF$(4)$), but it is not difficult to show that
such design necessarily contains two codewords at odd distance from each other, which contradicts the weight distribution
of~$C$. So, we have the following.

\begin{theorem}
There is no $\{24,21,10;1,4,12\}$-CR code $C$ in $H(8,4)$.
\end{theorem}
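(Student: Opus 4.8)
The plan is to derive a contradiction from the detailed combinatorial constraints that a putative $\{24,21,10;1,4,12\}$-CR code $C$ in $H(8,4)$ imposes on its weight-$4$ layer, following the outline already sketched in the text. First I would record the hard numerical data forced by the intersection array: $C$ has code distance $4$, and its weight distribution equals the self-dual distribution $[1,0,0,0,42,0,168,0,45]$. The key structural fact is that the $42$ weight-$4$ codewords form a design $C_4$ in the sense that every weight-$2$ word of $H(8,4)$ lies at distance exactly $2$ from precisely one element of $C_4$. I would verify this design property directly from the quotient matrix together with Lemma~\ref{l:loc} on local weight distributions, since the entries of $Q$ together with the weight distribution pin down how the weight-$4$ codewords sit relative to lower-weight vertices.

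Next I would focus on the parity structure of distances \emph{within} $C_4$. Because the code distance is $4$ and all codewords in $C_4$ have the same weight $4$, any two of them are at distance at least $4$; the claim I must establish is that $C_4$ necessarily contains at least one pair of codewords at \emph{odd} mutual distance. The natural route is a counting/parity argument. Over $\FF_4$, with $8$ coordinates, I would partition the pairwise distances among the $42$ codewords by parity and try to show that an all-even configuration is incompatible with the exact-cover design condition (every weight-$2$ word covered exactly once). One clean way is to consider the projection to the binary "support pattern" of each codeword and analyze the induced incidence structure, or to set up a system of equations counting, for each pair $(i,j)$ of coordinates, how the weight-$4$ codewords with nonzero entries in those positions interact with the $9$ weight-$2$ words supported on $\{i,j\}$. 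If all intra-$C_4$ distances were even, the design would force a divisibility or congruence condition that fails numerically for the parameters $(8,4,42)$.

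The step I expect to be the main obstacle is precisely this parity/counting argument forcing an odd distance: it is the one genuinely non-routine combinatorial claim, and the text itself flags it with "it is not difficult to show" without giving the mechanism. The difficulty is that the design property is a condition about weight-$2$ words and distance $2$, whereas the conclusion I want is about distances among weight-$4$ words, so I need a bridge between these two scales. I would look for an invariant—most likely a modular count of ordered triples (weight-$2$ word, pair of covering-related weight-$4$ codewords) or an eigenvalue/parity argument on the inner-distance distribution of $C_4$—that is computable from the design parameters alone and whose value is inconsistent with all distances being even. Once such an odd pair is exhibited, the contradiction is immediate: the weight distribution $[1,0,0,0,42,0,168,0,45]$ has support only on \emph{even} weights, so by translation invariance every pairwise distance between codewords of $C$ (in particular within $C_4$) must be even, contradicting the existence of the odd pair. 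This yields the nonexistence of the CR code in $H(8,4)$.
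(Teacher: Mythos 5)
You have the right skeleton: the weight distribution $[1,0,0,0,42,0,168,0,45]$ forces all pairwise distances in $C$ to be even, the $42$ weight-$4$ codewords form a set $C_4$ that exactly covers the weight-$2$ words (each at distance $2$ from precisely one element of $C_4$), and exhibiting two codewords at odd --- or merely too-small --- mutual distance gives the contradiction. But the heart of the proof is missing: you never actually produce the ``invariant'' or modular count that is supposed to show an all-even configuration is impossible, and you explicitly defer it as the step you expect to be the main obstacle. As written this is a plan, not a proof. Worse, the global counting route you sketch is doubtful on its face: the paper itself notes that designs with the exact-cover property do exist (at least two nonequivalent ones were found), so no divisibility or congruence condition ``computable from the design parameters alone'' can rule them out. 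The contradiction must come from the interaction of the cover condition with the evenness of distances on \emph{specific} codewords, not from a numerical obstruction at the level of the parameters $(8,4,42)$.

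The paper's actual argument is local and completely explicit. Assume w.l.o.g.\ that $00000000$ and $11110000$ are in $C$. Each of the six weight-$2$ words $12000000$, $13000000$, $10200000$, $10300000$, $10020000$, $10030000$ must be covered by a weight-$4$ codeword whose support meets the support of $11110000$ in exactly three positions (otherwise its distance to $11110000$ would be odd). Working through the cover and parity conditions forces, up to equivalence, the codewords $12201000$, $13022000$, $10333000$. Then the codeword covering $10000100$ can have no nonzero entry in positions $2$ through $5$, so it has the form $100001ab$; likewise $10000200$ is covered by some $100002a'b'$. These two codewords agree in positions $1$ through $5$ and differ in position $6$, so they are at distance at most $3$, contradicting the minimum distance $4$. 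To complete your proof you would need to carry out this kind of explicit forcing (or find a genuinely different obstruction), and the unspecified parity invariant you are hoping for is unlikely to exist for the reason above.
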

\begin{proof}
Assume w.l.o.g. that $00000000$ and $11110000$ are in $C$.
We know that

(i) the distance between two codewords is even;

(ii) every word $x$ of weight 2 is at distance 2 from exactly 1 weight-4 codeword $c$; we say that $x$ is \emph{{covered}} by $c$.

Consider the words $12000000$, $13000000$, $10200000$, $10300000$, $10020000$, $10030000$.
The support of the codeword that covers one of this words must intersect the support of the codeword
$11110000$ in exactly $3$ points, by (1). It is not difficult to find from (i) and (ii) that w.l.o.g. we have\\
$11110000$\\
$12201000$\\
$13022000$\\
$10333000$.

Now consider the word $10000100$.
The codeword that covers it, by (ii), cannot have nonzeros in position $2$, $3$, $4$, or $5$.
So, it is $100001ab$ for some $a$, $b$.

Similarly, consider the word $10000200$. It is covered by $100002a'b'$ for some $a'$, $b'$.

Now, we have two codewords $100001ab$ and $100002a'b'$ at distance less than~$4$ from each other. A contradiction.
\end{proof}

\subsection{\{24,21,10;1,4,12\} CR codes do not exist in the coset graph of the Golay
[24,12,8] code}

One of the ideas was to construct a binary $\{24,21,10;1,4,12\}$-CR as the union of cosets of the Golay
$[24,12,8]_2$ code $G$. Equivalently, to construct
a $\{24,21,10;1,4,12\}$-CR code
in the coset graph of~$G$.

The nonexistence of such codes can be shown as follows.
As was mentioned above, the weight-$3$ codewords of a binary $\{24,21,10;1,4,12\}$-CR code form a partition of the $24$ coordinates into $8$ groups of size $3$ (i.e., a $S(24,3,1)$ Steiner system).
Of course such system is unique up to equivalence.
However, there are $38233$ equivalence classes of such systems with respect to the Golay code. For each such system $C_3$, it is checked that a $\{24,21,10;1,4,12\}$-CR code in the coset graph of $G$ cannot include this $C_3$.
Namely, it is checked (computationally) that there is a vertex $x$
at distance $2$ from $C_3$ such that $|\{y\in C_3:\ d(x,y)=2|\}| > 2$
(for a $\{24,21,10;1,4,12\}$-CR code~$C$ and $x$ at distance $1$ or $2$
from $C$, it holds $|\{y\in C_3:\ d(x,y)=2|\}| \le 2$).

\section*{Acknowledgments}
The author thanks
the Supercomputing
Center of the Novosibirsk State University
for provided computational resources.


\begin{thebibliography}{BKM{\etalchar{+}}21}

\bibitem[BKM{\etalchar{+}}21]{BKMTV}
E.~A. Bespalov, D.~S. Krotov, A.~A. Matiushev, A.~A. Taranenko, and K.~V.
  Vorob'ev.
\newblock Perfect $2$-colorings of {H}amming graphs.
\newblock {\em
  \href{http://onlinelibrary.wiley.com/journal/10.1002/(ISSN)1520-6610}{J.
  Comb. Des.}}, 29(6):367--396, June 2021.
\newblock \DOI{10.1002/jcd.21771}.

\bibitem[KKM]{CRCtable}
J.~Koolen, D.~Krotov, and W.~Martin.
\newblock Completely regular codes: Tables.
\newblock Online: \url{http://crcodes.xyz}.

\bibitem[K{\"O}06]{KO:alg}
P.~Kaski and P.~R.~J. {\"O}sterg{\aa}rd.
\newblock {\em Classification Algorithms for Codes and Designs}, volume~15 of
  {\em Algorithms Comput. Math.}
\newblock Springer, Berlin, 2006.
\newblock \DOI{10.1007/3-540-28991-7}.

\bibitem[KP08]{KasPot08}
P.~Kaski and O.~Pottonen.
\newblock libexact user's guide, version 1.0.
\newblock Technical Report 2008-1, Helsinki Institute for Information
  Technology HIIT, 2008.

\bibitem[Kro11]{Kro:struct}
D.~S. Krotov.
\newblock On weight distributions of perfect colorings and completely regular
  codes.
\newblock {\em \href{http://link.springer.com/journal/10623}{Des. Codes
  Cryptography}}, 61(3):315--329, 2011.
\newblock \DOI{10.1007/s10623-010-9479-4}.

\bibitem[Kro20]{Kro:OA13}
D.~S. Krotov.
\newblock On the {OA}(1536,13,2,7) and related orthogonal arrays.
\newblock {\em
  \href{http://www.sciencedirect.com/science/journal/0012365X}{Discrete
  Math.}}, 343(2):111659/1--11, 2020.
\newblock \DOI{10.1016/j.disc.2019.111659}.

\bibitem[MP14]{nauty2014}
B.~D. McKay and A.~Piperno.
\newblock Practical graph isomorphism, {II}.
\newblock {\em J. Symb. Comput.}, 60:94--112, 2014.
\newblock \DOI{10.1016/j.jsc.2013.09.003}.

\bibitem[N{\"O}03]{cliquer}
S.~Niskanen and P.~R.~J. {\"O}sterg{\aa}rd.
\newblock Cliquer user's guide, version 1.0.
\newblock Tech. Rep. Y48, Communications Laboratory, Helsinki University of
  Technology, Espoo, Finland, 2003.

\bibitem[RP97]{RifPuj:1997}
J.~Rif\`a and J.~Pujol.
\newblock Translation-invariant propelinear codes.
\newblock {\em
  \href{http://ieeexplore.ieee.org/xpl/RecentIssue.jsp?punumber=18}{IEEE Trans.
  Inf. Theory}}, 43(2):590--598, 1997.
\newblock \DOI{10.1109/18.556115}.

\bibitem[SEN10]{SEN:2010:OA}
E.~D. Schoen, P.~T. Eendebak, and M.~V.~M. Nguyen.
\newblock Complete enumeration of pure-level and mixed-level orthogonal arrays.
\newblock {\em
  \href{http://onlinelibrary.wiley.com/journal/10.1002/(ISSN)1520-6610}{J.
  Comb. Des.}}, 18(2):123--140, 2010.
\newblock \DOI{10.1002/jcd.20236}.

\bibitem[Vas09]{Vas09:inter}
A.~{\mbox{Yu}}. Vasil'eva.
\newblock Local and interweight spectra of completely regular codes and of
  perfect colorings.
\newblock {\em \href{http://link.springer.com/journal/11122}{Probl. Inf.
  Transm.}}, 45(2):151--157, 2009.
\newblock \DOI{10.1134/S0032946009020069}, translated from
  \href{http://www.mathnet.ru/php/journal.phtml?jrnid=ppi\&option_lang=eng}{Probl.
  Peredachi Inf.}, 45(2):84-90, 2009.

\end{thebibliography}


\newcommand{\etalchar}[1]{$^{#1}$}
\providecommand\href[2]{#2} \providecommand\url[1]{\href{#1}{#1}}
  \def\DOI#1{{\href{https://doi.org/#1}{https://doi.org/#1}}}\def\DOIURL#1#2{{\href{https://doi.org/#2}{https://doi.org/#1}}}

\end{document}